\newtheorem{theorem}{Theorem}[section]
\newtheorem{corollary}[theorem]{Corollary}
\newtheorem{prop}[theorem]{Proposition}
\newtheorem{lemma}[theorem]{Lemma}
\newtheorem{remark}[theorem]{Remark}
\newtheorem{definition}[theorem]{Definition}
\theoremstyle{remark}
\newtheorem{rmk}[theorem]{Remark}
\numberwithin{equation}{section}
\def\neweq#1{\begin{equation}\label{#1}}
\def\endeq{\end{equation}}
\def\ri{\rightarrow}
\def\vphi{\varphi}
\def\a{\alpha}
\def\b{\beta}
\def\eq#1{(\ref{#1})}
\def\R{{\mathbb R} }
\def\rmc{\R^N\setminus\{0\}}
\def\o{\Omega }
\begin{document}

\title{A new critical curve for the Lane-Emden system}

\author[W. Chen]{Wenjing Chen}

\address{Departamento de Ingenier\'ia Matem\'atica and CMM, Universidad de Chile, Casilla 170 Correo 3, Santiago, Chile}

\email{wjchen1102@yahoo.com.cn}

\author[L. Dupaigne]{Louis Dupaigne}

\address{LAMFA, UMR CNRS 7352, Universit\'e de Picardie Jules Verne, 33 rue St Leu, 80039, Amiens Cedex, France}

\email{louis.dupaigne@math.cnrs.fr}

\author[M. Ghergu]{Marius Ghergu}

\address{School of Mathematical Sciences, University College Dublin, Belfield, Dublin 4, Ireland}

\email{marius.ghergu@ucd.ie}

\thanks{}


\subjclass[2000]{}


\keywords{}

\maketitle

\begin{abstract}
We study stable positive radially symmetric solutions for the Lane-Emden system $-\Delta u=v^p$ in $\R^N$, $-\Delta v=u^q$ in $\R^N$, where $p,q\geq 1$. We obtain a new critical hyperbola that optimally describes the existence of such solutions.

\end{abstract}

\section{Introduction}
We consider the Lane-Emden system
\begin{equation}\label{le}
\left\{\begin{aligned}
-&\Delta u=v^p, \; u>0&& \quad\mbox{ in }\R^N,\\
-&\Delta v=u^q, \; v>0&& \quad\mbox{ in }\R^N,
\end{aligned}\right.
\end{equation}
where $N\ge1$ and $p\ge q>0$.
Introduced independently by Mitidieri \cite{mitidieri} and Van der Vorst \cite{vandervorst}, the Sobolev critical hyperbola plays a crucial role in the analysis of \eqref{le}. In particular, Mitidieri \cite{mit96} (see also Serrin and Zou \cite{serrin-zou}) proved that \eq{le} has a nontrivial radially symmetric solution if and only if $(p,q)$ lies on or above the hyperbola i.e. when
\neweq{le1}
\frac{1}{p+1}+\frac{1}{q+1}\le 1-\frac{2}{N}.
\endeq
The Lane-Emden conjecture states that such a result should continue to hold for any positive solution (not necessarily radially symmetric). See Souplet \cite{souplet} and the references therein for the progress on this conjecture.

In this paper we characterize the stability of radially symmetric solutions of the Lane-Emden system \eq{le}, the definition of which we recall now. 
\begin{definition}
A solution $(u,v)$ to \eq{le} is stable if there exists a positive supersolution of the linearized system i.e. if there exists $(\phi, \psi)\in C^2(\R^{N})^{2}$ such that
$$
\left\{\begin{aligned}
-&\Delta \phi\geq pv^{p-1}\psi \; && \quad\mbox{ in }\R^{N},\\
-&\Delta \psi\geq  q u^{q-1}\phi&& \quad\mbox{ in }\R^{N},\\
&\phi,\psi>0&& \quad\mbox{ in }\R^{{N}}.
\end{aligned}\right.
$$
\end{definition}
Let us also recall  that if \eq{le1} holds, then
\neweq{lesingular}
(u_s,v_s)=(a|x|^{-\alpha},b|x|^{-\beta}), \quad x\in \R^N\setminus \{0\}
\endeq
is a weak solution of \eq{le} provided
\neweq{leab}
\alpha=\frac{2(p+1)}{pq-1}\;,\quad \beta=\frac{2(q+1)}{pq-1}
\endeq
and
$
a=(ST^p)^{\frac{1}{pq-1}}\;, b=(S^qT)^{\frac{1}{pq-1}},
S=\alpha(N-2-\alpha)\;, T=\beta(N-2-\beta).
$


Our main result states that the stability of a radial solution of the Lane-Emden system is determined by the position of the exponents $(p,q)$ with respect to a new critical curve, which we christen ``Joseph and Lundgren'', since the exponent introduced by these authors in \cite{jl72}  is the intersection of the curve with the diagonal $p=q$ .
\begin{theorem}\label{stable}
Assume $p\ge q\ge1$.
\begin{enumerate}
\item[(i)] If $N\ge11$ and $(p,q)$ lies on or above the Joseph-Lundgren critical curve i.e.
\neweq{lecv}
\Big[\frac{(N-2)^2-(\a-\b)^2}{4}  \Big]^2\geq pq\a \b (N-2-\a)(N-2-\b),
\endeq
then any radially symmetric solution $(u,v)$ of \eq{le} is stable and satisfies
$$
u<u_s\quad\mbox{ and }\quad v<v_s\quad\mbox{ in }\;\R^N\setminus\{0\},
$$
where $(u_{s},v_{s})$ is the singular solution given by \eqref{lesingular} and $\alpha,\beta$ are the scaling exponents given by \eqref{leab}.
\item[(ii)] If $N\le 10$ or if $N\ge11$ and \eq{lecv} fails, then there is no stable radially symmetric solution of \eqref{le}.
\end{enumerate}
\end{theorem}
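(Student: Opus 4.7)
The plan is to handle the two items separately, using a Picone-type stability inequality for the system and an Emden-Fowler phase-plane analysis of radial solutions. Throughout write $c=N-2$, $d=\alpha-\beta\ge 0$, $S=\alpha(c-\alpha)$, $T=\beta(c-\beta)$, and recall from \eqref{leab} the identities $\beta p=\alpha+2$, $\alpha q=\beta+2$, $b^{p}=aS$, $a^{q}=bT$, together with $a^{q-1}b^{p-1}=ST$.

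\smallskip\noindent\emph{Item (i).} I would proceed in two steps. First, establish the pointwise bounds $u<u_s$, $v<v_s$. Setting $U(t)=e^{\alpha t}u(e^t)$, $V(t)=e^{\beta t}v(e^t)$ turns the radial ODE for \eqref{le} into the autonomous system
\[
U''+(c-2\alpha)U'-SU+V^p=0,\qquad V''+(c-2\beta)V'-TV+U^q=0,
\]
whose unique positive equilibrium is $(a,b)$, corresponding to the singular solution. The characteristic polynomial of the linearization factors as $(\mu-\alpha)(\mu-\alpha+c)(\mu-\beta)(\mu-\beta+c)-pq\,ST$, and under \eqref{lecv} its four roots are all real; an invariant-region / monotonicity argument then shows that trajectories issuing from $U(-\infty)=V(-\infty)=0$ (the smooth radial solutions) approach $(a,b)$ monotonically from below, yielding $U<a$, $V<b$ throughout. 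Second, I construct a supersolution via the ansatz $\phi(x)=A|x|^{-s_\ast}$, $\psi(x)=B|x|^{-s_\ast+d}$ with $s_\ast=\tfrac{c+d}{2}$: the scaling identities ensure every power of $|x|$ matches, and the pointwise bounds reduce the linearized inequalities to
\[
A\,s_\ast(c-s_\ast)\ge p b^{p-1}B,\qquad B\,(s_\ast-d)(c-s_\ast+d)\ge q a^{q-1}A.
\]
Both quadratic factors equal $\tfrac{c^{2}-d^{2}}{4}$ at $s_\ast$, so by \eqref{lecv} the product of the left-hand sides dominates $pq\,ST=pb^{p-1}\cdot qa^{q-1}$; an appropriate choice of $A,B>0$ then fulfills both inequalities, and a standard truncation near the origin (using that $u,v$ are bounded there) produces the required $(\phi,\psi)\in C^{2}(\mathbb{R}^{N})^{2}$.

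\smallskip\noindent\emph{Item (ii).} Given any stable radial solution $(u,v)$ with supersolution $(\phi,\psi)$, Picone's identity applied to each linearized inequality gives, for every $\eta\in C_c^{\infty}(\mathbb{R}^{N}\setminus\{0\})$,
\[
p\int v^{p-1}\tfrac{\psi}{\phi}\eta^{2}\le\int|\nabla\eta|^{2},\qquad q\int u^{q-1}\tfrac{\phi}{\psi}\eta^{2}\le\int|\nabla\eta|^{2}.
\]
A continuation of the Emden-Fowler analysis shows that entire radial solutions oscillate about $(u_s,v_s)$ in a controlled way, so that the linearization at $(u_s,v_s)$ inherits stability from $(u,v)$. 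Testing the two inequalities with $\eta(x)=|x|^{\lambda}\chi_R(x)$ concentrated on large dyadic annuli and extracting the leading order reduces the problem to the existence of a real $\lambda$ and of a ratio $A/B>0$ satisfying the scalar analogues of the two inequalities displayed in item (i), now with $(u_s,v_s)$ in place of $(u,v)$. By the same algebra, such $\lambda$ and $A/B$ exist if and only if \eqref{lecv} holds; failure of \eqref{lecv}---automatic for $N\le 10$---therefore precludes stable radial solutions.

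\smallskip\noindent\emph{Main obstacle.} The delicate core is the Emden-Fowler analysis: in item (i), proving that the radial trajectory approaches $(a,b)$ monotonically from below, with the non-oscillation threshold being precisely \eqref{lecv}; in item (ii), extracting enough asymptotic information to transfer stability from $(u,v)$ to $(u_s,v_s)$. Extending the classical scalar Joseph-Lundgren phase-plane reasoning to a four-dimensional flow while preserving the sharp threshold is the principal technical difficulty. The underlying algebraic fact that ties both parts together is that the characteristic polynomial $(\mu-\alpha)(\mu-\alpha+c)(\mu-\beta)(\mu-\beta+c)=pq\,ST$ has all real roots exactly when \eqref{lecv} holds, which is precisely the new critical curve.
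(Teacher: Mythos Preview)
Your explicit supersolution in item (i), Step~2, is exactly right and coincides with the paper's construction (their $(\phi,\psi)$ in \eqref{fipsi}). The algebraic link you isolate---that the quartic $(\mu-\alpha)(\mu-\alpha+c)(\mu-\beta)(\mu-\beta+c)=pq\,ST$ has only real roots precisely when \eqref{lecv} holds---is also correct and underlies the whole picture.

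The gap is the four-dimensional Emden--Fowler analysis you flag yourself. In item (i) you need that the nonlinear trajectory stays in $\{U<a,\,V<b\}$ for all time; reality of the eigenvalues at the equilibrium is a local linear statement and does not by itself produce a global invariant region in a 4D flow. In item (ii) you need enough asymptotic control to ``transfer'' stability from $(u,v)$ to $(u_s,v_s)$, but you have not said what the controlled oscillation actually is or why the Picone remainder terms on annuli vanish in the limit. Extending the Joseph--Lundgren phase-plane argument sharply to this coupled setting is genuinely open-ended as stated.

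The paper sidesteps all of this. For (i), instead of proving $U<a$, $V<b$ dynamically, it argues by contradiction: if the ordering fails, one locates the first critical points of $U=u_s-u$, $V=v_s-v$ and the first zeros, and then tests the linearized inequalities \eqref{lec7} against the very supersolution $(\phi,\psi)$ of \eqref{fipsi}; an integration by parts yields the pointwise inequality $V\phi'-\phi V'+U\psi'-\psi U'\le 0$, which is violated at the first critical radius. For (ii), the paper again first proves $u<u_s$, $v<v_s$ (Proposition~\ref{louis}), this time using only that stability gives minimality on bounded domains (Lemma~\ref{stablebounded}, Corollary~\ref{lecbalcor}) together with a sign-change bookkeeping and the maximum principle; no phase-plane is needed. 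With the ordering in hand, it sets $M_1=\sup u/u_s$, $M_2=\sup v/v_s$, uses the Newtonian-potential representation to get $M_1\le M_2^{p}$, $M_2\le M_1^{q}$, forces $M_1=M_2=1$, and then rescales along a saturating sequence to produce a stable limit equal to $u_s$, contradicting Proposition~\ref{stabsing}. So the paper's key idea is that comparison arguments plus the explicit $(\phi,\psi)$ already deliver the ordering, and the rescaling/blow-down replaces your asymptotic transfer of stability.
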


\begin{rmk}Equation \eqref{lecv} is derived by studying the stability of the singular solution $(u_{s},v_{s})$  given by \eqref{lesingular}.
\end{rmk}

\begin{figure}[tbph]
\begin{center}
\includegraphics[height=9cm]{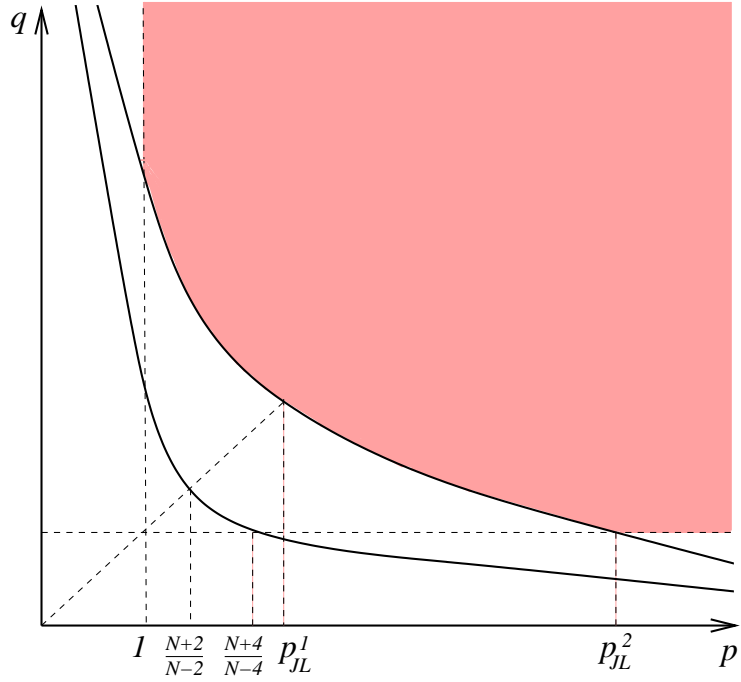}
\end{center}
\caption{The stable region (shaded) for radially symmetric solutions of the Lane-Emden system \eq{le}.}
\end{figure}

\begin{rmk} \

\begin{itemize}
\item The above theorem was first proved by Cowan for $1\le N\le 10$, $p\ge q\ge 2$ and $(u,v)$ not necessarily radial. See \cite{cowan}.
\item In the case $p=q$, using Remarks 1.1(a) and 2.1(a) of Souplet \cite{souplet} and Farina's seminal work for the case of a single equation \cite{farina},  part (ii) of the theorem readily follows. The result continues to hold for possibly nonradial solutions, assumed to be stable only outside a compact set. 
\item In the biharmonic case $q=1$, the theorem was first proved by Karageorgis \cite{karageorgis} using the asymptotics found by Gazzola and Grunau in \cite{gg}.
\item In all the other cases, only partial results were known. To the authors knowledge, the state of the art for nonradial solutions is contained in the following references: Wei and D. Ye \cite{wy2}, Wei, Xu and Yang \cite{wxx}, Hajlaoui, A. Harrabi and D. Ye \cite{hhy} for the biharmonic case, and Cowan \cite{cowan} for the general case. We believe that the methods of the paper \cite{dggw} by Goubet, Warnault and two of the authors should slightly improve the known results (and coincide with \cite{hhy} in the biharmonic case).
\item Our result does not cover the case where one of the exponents is less than 1.
\item The left hand-side in \eq{lecv} is related to the following Hardy-Rellich inequality :
\neweq{mussina}
\int_{\R^N}|x|^{2-\gamma}|\Delta \vphi|^2 dx\geq C_{\gamma}\int_{\R^N} |x|^{-2-\gamma} \vphi^2 dx.
\endeq
The optimal constant $C_\gamma$ in the class of radially symmetric functions $\vphi=\vphi(|x|)$ is given by
\neweq{optim}
C_\gamma=\inf_{\substack{\varphi\in C^\infty_c(\R^N\setminus\{0\})\\  0\neq \varphi=\vphi(|x|)}}\frac{\displaystyle \int_{\o}|x|^{2-\gamma}|\Delta \varphi|^2dx}{\displaystyle\int_{\o}|x|^{-2-\gamma}\vphi^2 dx}=\Big[\frac{(N-2)^2-\gamma^2}{4}  \Big]^2,
\endeq
and the above infimum is never achieved. See Caldiroli and Musina \cite{cmus}. We remark that the optimal constant $C_\gamma$ in \eq{optim} corresponds to the left hand-side in \eq{lecv} with $\gamma=\a-\b\in [0,2)$.

\end{itemize}
\end{rmk}

As an immediate corollary of Theorem \ref{stable} and standard blow-up analysis, we obtain the following regularity result.

\begin{corollary}Let $B$ denote the unit ball of $\R^{N}$, $N\ge1$, $\lambda,\mu>0$. Let $f,g\in C^{1}(\R)$ be two nondecreasing functions such that $f(0)\ge0$, $g(0)>0$, $f'(0)g'(0)>0$ and
$$
\lim_{t\to+\infty} \frac{f'(t)}{t^{p-1}}=a,\qquad \lim_{t\to+\infty} \frac{g'(t)}{t^{q-1}}=b
$$
for some $a,b>0$, $p\ge q\ge 1$, $pq>1$. Then, any extremal solution to the system
\begin{equation}\label{x}
\left\{\begin{aligned}
-&\Delta u=\lambda f(v), \; u>0&& \quad\mbox{ in }B,\\
-&\Delta v=\mu g(u), \; v>0&& \quad\mbox{ in }B,\\
&u=v=0&&\quad\mbox{ on }\partial B
\end{aligned}\right.
\end{equation}
is bounded if either $N\le 10$ or if $N\ge11$ and $(p,q)$ lies below the Joseph-Lundgren critical curve i.e.\eqref{lecv} fails.
\end{corollary}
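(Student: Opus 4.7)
The plan is a standard contradiction by blow-up analysis, turning an unbounded extremal solution of \eqref{x} into a positive, radially symmetric, stable classical solution of \eqref{le} on $\R^{N}$, which is forbidden by Theorem \ref{stable}(ii).

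\smallskip

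\textbf{Step 1 (stability and symmetry of the extremal solution).} Recall that the extremal solution $(u^{*},v^{*})$ is the pointwise monotone limit of the minimal classical solutions $(u_{\lambda'},v_{\mu'})$ as $(\lambda',\mu')\nearrow(\lambda,\mu)$ along the minimal branch; each such minimal pair is stable in the sense of the Definition (the principal eigenvalue of the linearized cooperative system is positive, from the implicit function construction). Stability is preserved in the limit: the supersolutions $(\phi_{\lambda'},\psi_{\mu'})$ produced by the principal eigenfunctions along the branch, suitably normalized, converge to a positive supersolution $(\phi^{*},\psi^{*})$ of the linearized system at $(u^{*},v^{*})$, positivity being retained via Harnack's inequality for cooperative systems. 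Since $B$ is a ball and the system is cooperative with non-decreasing nonlinearities, the symmetry results of Troy and Gidas--Ni--Nirenberg type force $(u^{*},v^{*})$ to be radial and radially decreasing.

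\smallskip

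\textbf{Step 2 (rescaling).} Assume for contradiction that $(u^{*},v^{*})$ is unbounded. From $-\Delta v = \mu g(u)$, $g(0)>0$, $u^{*}\ge 0$ and elliptic regularity, if one component blows up so does the other. Pick a sequence of minimal solutions $(u_{k},v_{k})\to(u^{*},v^{*})$ with $M_{k}:=u_{k}(0)\to\infty$ and $N_{k}:=v_{k}(0)\to\infty$, and set
\[
\tilde u_{k}(y) := M_{k}^{-1}u_{k}(\rho_{k}y),\qquad \tilde v_{k}(y) := N_{k}^{-1}v_{k}(\rho_{k}y),
\]
with $\rho_{k}$ determined by $M_{k}\rho_{k}^{-2}=\lambda (a/p) N_{k}^{p}$. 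Then $\tilde u_{k}(0)=\tilde v_{k}(0)=1$ and, by radial monotonicity, $0\le\tilde u_{k},\tilde v_{k}\le 1$ on the expanding balls $B(0,1/\rho_{k})$. Using the hypotheses $f'(t)\sim a t^{p-1}$, $g'(t)\sim bt^{q-1}$, the rescaled pair satisfies
\[
-\Delta\tilde u_{k} = \tilde v_{k}^{p}(1+o(1)),\qquad -\Delta\tilde v_{k} = \kappa_{k}\,\tilde u_{k}^{q}(1+o(1)),
\]
where $\kappa_{k}=C\,M_{k}^{q+1}/N_{k}^{p+1}$ with $C$ independent of $k$. A short Liouville-type argument rules out $\kappa_{k}\to 0$ and $\kappa_{k}\to\infty$ (for instance $\kappa_{k}\to 0$ would give a bounded harmonic limit $\tilde v\equiv 1$ and then an entire bounded solution of $-\Delta \tilde u=1$, which is impossible in $\R^{N}$), so up to a subsequence $\kappa_{k}\to\kappa\in(0,\infty)$.

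\smallskip

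\textbf{Step 3 (limit and conclusion).} Standard elliptic $C^{2,\alpha}_{\mathrm{loc}}$ estimates and the radial a priori bound yield convergence $(\tilde u_{k},\tilde v_{k})\to(U,V)$ in $C^{2}_{\mathrm{loc}}(\R^{N})$, where $(U,V)$ is radial, $U(0)=V(0)=1>0$, and solves $-\Delta U=V^{p}$, $-\Delta V=\kappa U^{q}$ on $\R^{N}$; an obvious scaling of $V$ absorbs $\kappa$ to produce a genuine solution of \eqref{le}. Rescaling and normalizing the supersolutions $(\phi_{k},\psi_{k})$ of the linearized system at $(u_{k},v_{k})$ so that $\tilde\phi_{k}(0)=1$, and applying Harnack's inequality for cooperative systems on fixed compact sets to prevent degeneration, one extracts a limit $(\Phi,\Psi)>0$ on $\R^{N}$ satisfying
\[
-\Delta\Phi\ge pV^{p-1}\Psi,\qquad -\Delta\Psi\ge qU^{q-1}\Phi,
\]
so that $(U,V)$ is stable in the sense of the Definition. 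Under the hypotheses of the corollary, Theorem \ref{stable}(ii) rules out the existence of such $(U,V)$, a contradiction. Hence $(u^{*},v^{*})$ is bounded, and by elliptic regularity it is classical.

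\smallskip

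\textbf{Main obstacle.} The purely analytic points --- elliptic bounds and convergence --- are routine. The genuinely delicate step is the transfer of stability: first from the minimal branch to the extremal weak solution $(u^{*},v^{*})$, and then from $(u^{*},v^{*})$ to the blow-up profile $(U,V)$, ensuring that the rescaled supersolutions do not collapse to zero. Both issues are addressed by choosing normalizations at the origin and invoking Harnack's inequality adapted to the cooperative structure of the linearized system.
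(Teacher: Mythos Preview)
Your proposal is correct and follows essentially the same strategy the paper indicates: the paper does not give a detailed proof but states that it is ``a straightforward adaptation of Theorem 1.8 in \cite{ddf}, using the version of the blow-up technique introduced by Pol\'a\v{c}ik, Quittner and Souplet \cite{pqs}'', i.e.\ exactly the blow-up--to--entire--stable--solution contradiction with Theorem~\ref{stable}(ii) that you carry out. The only minor difference is that the paper points to the PQS doubling lemma for the rescaling, whereas you blow up at the origin using radial monotonicity; in the radially symmetric setting this shortcut is legitimate, and your identification of the stability transfer as the delicate step matches what the adaptation of \cite{ddf} requires.
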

For the notion of extremal solution for systems, we refer to Montenegro \cite{montenegro}. See also Cowan \cite{cowan2} for partial results on general domains.
The proof is a straightforward adaptation of  Theorem 1.8 in \cite{ddf}, using the version of the blow-up technique introduced by Polacik, Quittner and Souplet \cite{pqs}, so we skip it.

\section{Preliminary Results}

The following three results will serve for the purpose of comparing solutions. In the lemma below, we say that a solution is strictly stable in a bounded region $\Omega\subset\R^{N}$ if the principal eigenvalue of the linearized equation with Dirichlet boundary conditions in $\Omega$ is strictly positive.
\begin{lemma} \label{corst}
Let $(u,v)\in C^{2}(\R^{N})^2$ be a stable solution of \eq{le}. 
Then, given any bounded domain $\o\subset\R^{N}$, $(u,v)$ is strictly stable in $\o$. In particular, the linearized operator satisfies the maximum principle, that is,  any pair $(\phi, \psi)\in C^{2}(\overline{\Omega})^2$ such that
$$
\left\{\begin{aligned}
-&\Delta \phi\geq pv^{p-1}\psi \; && \quad\mbox{ in }\o,\\
-&\Delta \psi\geq qu^{q-1}\phi\;&& \quad\mbox{ in }\o,\\
&\phi,\psi\geq 0&& \quad\mbox{ on }\partial \o,
\end{aligned}\right.
$$
satisfies $\phi, \psi\geq 0$ in $\o$.
\end{lemma}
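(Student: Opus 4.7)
The plan is to leverage the positive supersolution $(\Phi,\Psi)\in C^2(\R^N)^2$ of the linearized system furnished by the stability assumption, using it as a barrier via a rescaling argument combined with the scalar strong maximum principle. Since $\Omega$ is bounded, $\overline\Omega$ is compact, so $\Phi$ and $\Psi$ are bounded above and bounded below by positive constants on $\overline\Omega$; this pointwise property will be used throughout.

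To establish the maximum principle (the ``in particular'' assertion), given $(\phi,\psi)$ as in the statement, I will introduce
$$s^\ast:=\inf\{s\geq 0 \;:\; \phi+s\Phi\geq 0 \text{ and } \psi+s\Psi\geq 0 \text{ in } \Omega\},$$
which is finite by boundedness of $\phi,\psi$ and positivity of $\Phi,\Psi$. Arguing by contradiction, if $s^\ast>0$ then by continuity one of $\phi+s^\ast\Phi$, $\psi+s^\ast\Psi$ must attain the value $0$; say the former at a point $x_0$. Since $\phi\geq 0$ and $\Phi>0$ on $\partial\Omega$, the point $x_0$ must lie in $\Omega$. Adding the two differential inequalities will yield
$$-\Delta(\phi+s^\ast\Phi)\geq pv^{p-1}(\psi+s^\ast\Psi)\geq 0 \text{ in } \Omega,$$
so $\phi+s^\ast\Phi$ is a nonnegative superharmonic function with an interior zero, hence identically zero by the scalar strong maximum principle, contradicting its strict positivity on $\partial\Omega$. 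Thus $s^\ast=0$.

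For the strict stability, I will invoke the Krein--Rutman theorem applied to the compact positive resolvent of the cooperative linearized system on $\Omega$ with Dirichlet boundary conditions. This will produce a principal eigenvalue $\lambda_1=\lambda_1(\Omega)\in\R$ and a positive eigenpair $(\varphi_1,\psi_1)$ vanishing on $\partial\Omega$ satisfying $-\Delta\varphi_1=pv^{p-1}\psi_1+\lambda_1\varphi_1$ and $-\Delta\psi_1=qu^{q-1}\varphi_1+\lambda_1\psi_1$ in $\Omega$. Assuming $\lambda_1\leq 0$ for contradiction, I then set
$$t^\ast:=\sup\{t\geq 0 \;:\; t\varphi_1\leq\Phi \text{ and } t\psi_1\leq\Psi \text{ in } \Omega\},$$
which is positive (by strict positivity of $\Phi,\Psi$ on $\overline\Omega$) and finite (evaluating at any interior point). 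Without loss of generality $\Phi-t^\ast\varphi_1$ attains $0$; this must happen at an interior point since on $\partial\Omega$ the function equals $\Phi>0$. Using $\Psi-t^\ast\psi_1\geq 0$, $\lambda_1\leq 0$, and $\varphi_1\geq 0$, a direct computation will give
$$-\Delta(\Phi-t^\ast\varphi_1)\geq pv^{p-1}(\Psi-t^\ast\psi_1)-t^\ast\lambda_1\varphi_1\geq 0 \text{ in } \Omega.$$
The strong maximum principle then forces $\Phi\equiv t^\ast\varphi_1$, contradicting the boundary positivity of $\Phi$. Hence $\lambda_1(\Omega)>0$.

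The main technical hurdle will be invoking the correct form of Krein--Rutman for the non-self-adjoint cooperative eigenvalue problem, which requires checking compactness and strong positivity of the resolvent on an appropriate cone in the product space; once the principal eigenpair is in hand, both conclusions reduce to the same rescaling-and-touching argument built around the single supersolution $(\Phi,\Psi)$.
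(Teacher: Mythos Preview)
Your argument is correct and rests on the same underlying machinery as the paper's proof: both exploit the positive supersolution $(\Phi,\Psi)$ furnished by stability together with the Krein--Rutman principal eigenvalue for the cooperative linearized system. The paper, however, simply invokes Sweers \cite{sweers92} and Busca--Sirakov \cite{bs04} for the standard equivalences (existence of a strict positive supersolution $\Leftrightarrow$ $\lambda_1>0$ $\Leftrightarrow$ validity of the maximum principle), whereas you spell out two of these implications by hand via the sliding/touching arguments with the barriers $(\Phi,\Psi)$. Your route has the virtue of being more self-contained---only Krein--Rutman for the cooperative resolvent is left as a black box---while the paper's route is shorter and points directly to the literature where these sliding arguments already appear in full generality. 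Note also that you derive the maximum principle directly from the supersolution rather than passing through $\lambda_1>0$, which is a mild logical reorganization but not a different idea.
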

\proof Since $(u,v)$ is stable in $\R^{N}$, the linearized equation has a strict supersolution in $\Omega$.
As observed by Sweers \cite{sweers92} and Busca-Sirakov \cite{bs04}, this implies in turn that the principal eigenvalue of the linearized operator with Dirichlet boundary conditions in $\Omega$ is strictly positive and equivalently that the maximum principle holds.
\hfill\qed

\

In the next lemma, we say that a solution is minimal if it lies below any (local) supersolution of the same equation. See e.g. \cite{ddgr} for the notion of minimal solution.
\begin{lemma}\label{stablebounded}
Assume $p\ge q\ge1$ and let $\o\subset\R^{N}$ be a bounded domain, $a,b\in C(\partial \o)$, $a,b\geq 0$.  If $(u,v)\in C^{2}(\overline{\Omega})^{2}$ is a strictly stable solution of
\neweq{lecball}
\left\{\begin{aligned}
-&\Delta u=v^p \; && \quad\mbox{ in }\o,\\
-&\Delta v=u^q&& \quad\mbox{ in }\o,\\
&u=a(x)\,,v=b(x)&& \quad\mbox{ on }\partial \o,
\end{aligned}\right.
\endeq
then $(u,v)$ is minimal.
\end{lemma}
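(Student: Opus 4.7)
The plan is to compare $(u,v)$ with an arbitrary supersolution $(U,V)$ of \eqref{lecball}, that is, a pair satisfying $-\Delta U\ge V^{p}$, $-\Delta V\ge U^{q}$ with $U,V>0$ in $\Omega$ and $U\ge a$, $V\ge b$ on $\partial\Omega$. Setting $\phi=U-u$ and $\psi=V-v$, it suffices to prove that $\phi,\psi\ge 0$ in $\Omega$, and for this I would like to invoke the maximum principle contained in Lemma \ref{corst}. The boundary conditions are immediate: $\phi=U-a\ge0$ and $\psi=V-b\ge 0$ on $\partial\Omega$. The issue is that the differential inequalities satisfied by $(\phi,\psi)$ involve the nonlinearities $V^{p}-v^{p}$ and $U^{q}-u^{q}$, rather than the linearizations $pv^{p-1}\psi$ and $qu^{q-1}\phi$ required by Lemma \ref{corst}.

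This is precisely the place where the hypothesis $p\ge q\ge 1$ enters: the functions $s\mapsto s^{p}$ and $s\mapsto s^{q}$ are convex on $[0,\infty)$, so the tangent-line inequality gives
\[
V^{p}-v^{p}\ge pv^{p-1}(V-v)=pv^{p-1}\psi,\qquad U^{q}-u^{q}\ge qu^{q-1}(U-u)=qu^{q-1}\phi,
\]
valid pointwise in $\Omega$ without any sign information on $\phi$ or $\psi$. Combining with the equations for $u,v$ and the supersolution inequalities for $U,V$, we obtain
\[
-\Delta\phi\ge pv^{p-1}\psi,\qquad -\Delta\psi\ge qu^{q-1}\phi\quad\text{in }\Omega,
\]
together with $\phi,\psi\ge 0$ on $\partial\Omega$.

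Since $(u,v)$ is strictly stable in $\Omega$, the principal eigenvalue of the linearized operator with Dirichlet data is strictly positive, and hence (as in Sweers \cite{sweers92} and Busca--Sirakov \cite{bs04}, which is exactly the second conclusion of Lemma \ref{corst}) the linearized operator satisfies the maximum principle. Applying it to $(\phi,\psi)$ yields $\phi,\psi\ge 0$ in $\Omega$, i.e.\ $u\le U$ and $v\le V$, which is the definition of minimality. The only subtle point is the convexity step, which is the reason the hypothesis $p\ge q\ge 1$ cannot be dropped; everything else is a routine application of the previously established maximum principle.
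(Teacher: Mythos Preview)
Your proof is correct and uses the same two ingredients as the paper's: the convexity inequality $s^{p}-t^{p}\ge pt^{p-1}(s-t)$ for $p\ge 1$, and the maximum principle for the linearized operator that comes with strict stability. The organization differs slightly. You compare $(u,v)$ directly with an arbitrary supersolution $(U,V)$, set $(\phi,\psi)=(U-u,V-v)$, and use convexity to obtain $-\Delta\phi\ge pv^{p-1}\psi$, $-\Delta\psi\ge qu^{q-1}\phi$, whence $\phi,\psi\ge 0$. The paper instead first asserts the existence of the minimal solution $(u_{m},v_{m})$ (using $(u,v)$ as a supersolution), sets $(\phi,\psi)=(u-u_{m},v-v_{m})\ge 0$, and applies the same convexity in the form $-\Delta\phi=v^{p}-v_{m}^{p}\le pv^{p-1}\psi$ to deduce $\phi,\psi\le 0$, hence $\phi=\psi=0$. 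Your route is a bit more direct since it avoids invoking the existence of $(u_{m},v_{m})$ as a separate step; the paper's route has the small advantage of yielding the explicit identification $u=u_{m}$, $v=v_{m}$.
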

\begin{proof}
Assume that $(u,v)$ is a strictly stable solution of \eqref{lecball}.
By the maximum principle, 
$$
u\geq \min_{\partial\o}a\,,\quad v\geq \min_{\partial\o}b\quad\mbox{ in } \o.
$$ 
In particular, there exists the minimal solution $(u_{m},v_{m})$ of \eq{lecball} and 
$$
u\geq u_m\geq \min_{\partial\o}a\,,\quad v\geq v_m\geq \min_{\partial\o}b\quad\mbox{ in } \o.
$$ 
Set $\phi=u-u_m$, $\psi=v-v_m$. Then, $\phi, \psi\geq 0$ in $\o$ and, since $p\ge q\ge1$,
$$
\left\{\begin{aligned}
-&\Delta \phi=v^p-v^p_m\leq pv^{p-1}\psi \; && \quad\mbox{ in }\o,\\
-&\Delta \psi=u^q-u_m^q\leq q u^{q-1}\phi&& \quad\mbox{ in }\o,\\
&\phi=\psi=0&& \quad\mbox{ on }\partial \o.
\end{aligned}\right.
$$
Since $(u,v)$ is strictly stable, the maximum principle holds and implies that $\phi, \psi\leq 0$ in $\o$.  It follows that $\phi\equiv\psi\equiv 0$, that is, $u=u_m$ and $v=v_m$.
\end{proof}
As an immediate consequence of the two previous lemmas, we obtain
\begin{corollary}\label{lecbalcor}
Let $(u,v)\in C^{2}(\R^{N})^2$ be a stable solution of \eq{le} and let $(u_s,v_s)$ be the singular solution defined by  \eq{lesingular}. If there exists $R>0$ such that $u(R)\leq u_s(R)$ and $v(R)\leq v_s(R)$, then
$$
u<u_s\quad\mbox{ and }\quad v<v_s\quad\mbox{ in }B_R\setminus\{0\}.
$$
\end{corollary}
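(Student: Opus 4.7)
The plan is to compare $(u,v)$ with $(u_{s},v_{s})$ by studying the difference $\phi=u_{s}-u$, $\psi=v_{s}-v$ as a solution of a pointwise linearized differential inequality, and then applying the maximum principle from Lemma~\ref{corst}. The singularity of $(u_{s},v_{s})$ at the origin is handled by first working on annuli $\Omega_\ep = B_{R}\setminus\overline{B_\ep}$ and then sending $\ep\to 0$.

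The key observation is that one can convert the system into linearized inequalities that hold pointwise, with no a priori sign restriction on $\phi$ or $\psi$. Since $p,q\ge 1$, the convexity of $t\mapsto t^{r}$ on $[0,\infty)$ yields the tangent bound $a^{r}-b^{r}\ge rb^{r-1}(a-b)$ valid for all $a,b\ge 0$. Applying this with $r=p$ to $(a,b)=(v_{s},v)$ and with $r=q$ to $(a,b)=(u_{s},u)$, and subtracting the relevant equations for $(u_{s},v_{s})$ from those for $(u,v)$, one obtains
\[
-\Delta \phi \ge p v^{p-1}\psi,\qquad -\Delta \psi \ge q u^{q-1}\phi \quad\mbox{in }\Omega_\ep.
\]

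Next, I would verify the boundary signs on $\partial\Omega_\ep$. On $\partial B_{R}$ the hypothesis directly gives $\phi,\psi\ge 0$ (using radial symmetry). On $\partial B_\ep$, the explicit profile \eqref{lesingular} guarantees $u_{s}(\ep),v_{s}(\ep)\to+\infty$ as $\ep\to 0$, while $u,v$ are bounded on $\overline{B_{R}}$; hence $\phi,\psi>0$ on $\partial B_\ep$ once $\ep$ is small enough. Since $(u,v)$ is strictly stable on every bounded subdomain by Lemma~\ref{corst}, the associated maximum principle applies on $\Omega_\ep$ and yields $\phi,\psi\ge 0$ in $\Omega_\ep$. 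Letting $\ep\to 0$ then produces $u\le u_{s}$ and $v\le v_{s}$ throughout $B_{R}\setminus\{0\}$.

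For the strict inequality, the already established bounds convert the linearized inequalities into $-\Delta\phi\ge pv^{p-1}\psi\ge 0$ and $-\Delta\psi\ge qu^{q-1}\phi\ge 0$, so both $\phi$ and $\psi$ are nonnegative and superharmonic on the connected open set $B_{R}\setminus\{0\}$ (connected because the existence of the singular solution forces $N\ge 3$). An interior zero of either would, by the strong maximum principle, force it to vanish identically, contradicting the blow-up of $u_{s},v_{s}$ at the origin. The only delicate point I foresee is ensuring the linearized inequality holds without any prior sign information on $\phi,\psi$, and that is precisely what the global convex tangent bound provides; the role of Lemma~\ref{stablebounded} in this corollary is implicit rather than direct, its spirit being that strictly stable solutions behave minimally and are therefore dominated by any admissible supersolution of the system, such as $(u_{s},v_{s})$.
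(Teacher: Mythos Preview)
Your argument is correct and essentially coincides with the paper's: both exploit the blow-up of $(u_s,v_s)$ at the origin to secure the comparison near $0$, then work on an annulus and invoke the strict stability of $(u,v)$ to propagate the inequality. The only cosmetic difference is that the paper routes the annulus step through Lemma~\ref{stablebounded} (strictly stable $\Rightarrow$ minimal, hence below the supersolution $(u_s,v_s)$), whereas you inline that lemma's content by writing the convex tangent inequality for $(\phi,\psi)=(u_s-u,v_s-v)$ and applying Lemma~\ref{corst} directly; your strong-maximum-principle justification of the strict inequality is also a bit more explicit than the paper's.
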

\begin{proof}
Since $u_s(0)=v_s(0)=\infty$, there exists $r\in (0,R)$ such that
\neweq{finq}
u<u_s\quad\mbox{ and }\quad v<v_s\quad\mbox{ in }\overline{B_r}\setminus\{0\}.
\endeq
We next apply  Lemma \ref{stablebounded} for $\o=B_R\setminus\overline{B_r}$, $a(x)=u$, $b(x)=v$. Thus $(u,v)$ is the minimal solution of \eq{lecball} and $u<u_s$, $v<v_s$ in $B_R\setminus\overline{B_r}$. This last inequality together with \eq{finq} yield the conclusion.
\end{proof}
%
%
%

\subsection{Stability of the singular solution.} In this part we investigate the stability of the singular solution $(u_s,v_s)$ given by \eq{lesingular}.

\begin{prop}\label{stabsing}
The following are equivalent:
\begin{enumerate}
\item[(i)] The singular solution $(u_s,v_s)$ is stable in $\R^N\setminus\{0\}$;
\item[(ii)] The singular solution $(u_s,v_s)$ is stable outside of some compact set;
\item[(iii)] $(p,q)$ satisfies \eq{lecv}.
\end{enumerate}
\end{prop}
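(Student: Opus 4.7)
The plan is to close the cycle (i) $\Rightarrow$ (ii) $\Rightarrow$ (iii) $\Rightarrow$ (i). The implication (i) $\Rightarrow$ (ii) is immediate by restriction. For (iii) $\Rightarrow$ (i), I would try the explicit radial ansatz $\phi(x)=|x|^{-s_{0}}$, $\psi(x)=c|x|^{-t_{0}}$ with
\[
s_{0}=\frac{N-2+\alpha-\beta}{2},\qquad t_{0}=\frac{N-2-\alpha+\beta}{2},
\]
chosen so that $s_{0}+t_{0}=N-2$ (which gives the clean identity $-\Delta|x|^{-s_{0}}=s_{0}t_{0}|x|^{-s_{0}-2}$) and $s_{0}-t_{0}=\alpha-\beta$ (so that exponents line up on both sides of the supersolution inequalities). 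Using the identities $\beta(p-1)+t_{0}=s_{0}+2$ and $\alpha(q-1)+s_{0}=t_{0}+2$, which follow directly from \eqref{leab}, the two inequalities reduce to $s_{0}t_{0}\geq pb^{p-1}c$ and $cs_{0}t_{0}\geq qa^{q-1}$. These admit a common positive $c$ precisely when $(s_{0}t_{0})^{2}\geq pq\,a^{q-1}b^{p-1}$; and since $s_{0}t_{0}=\frac{(N-2)^{2}-(\alpha-\beta)^{2}}{4}$ and $a^{q-1}b^{p-1}=ST=\alpha\beta(N-2-\alpha)(N-2-\beta)$ (a short check from the formulas for $a,b$), this is exactly \eqref{lecv}.

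For (ii) $\Rightarrow$ (iii), I would first average over spheres; since $(u_{s},v_{s})$ is radial and averaging commutes with $-\Delta$, we may assume $\phi,\psi$ radial on $\{r>R_{0}\}$. The Emden-Fowler substitution $\tau=\log r$, $\phi(r)=r^{-s_{0}}\Phi(\tau)$, $\psi(r)=r^{-t_{0}}\Psi(\tau)$ then turns the supersolution system into the \emph{autonomous} linear cooperative system
\[
-\Phi''+(\alpha-\beta)\Phi'+L\Phi\geq P\Psi,\qquad -\Psi''-(\alpha-\beta)\Psi'+L\Psi\geq Q\Phi\qquad\text{on }(\tau_{0},\infty),
\]
with $L=s_{0}t_{0}=\frac{(N-2)^{2}-(\alpha-\beta)^{2}}{4}$, $P=pb^{p-1}$, $Q=qa^{q-1}$, and $PQ=pq\,\alpha\beta(N-2-\alpha)(N-2-\beta)$. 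By an Agmon-Allegretto-Piepenbrink type argument for cooperative systems (in the spirit of Sweers and Busca-Sirakov, already invoked in Lemma~\ref{corst}), the existence of a positive supersolution for all large $\tau$ forces the bottom of the $L^{2}(\mathbb{R})$-spectrum of the asymptotic autonomous operator to be nonnegative. A direct Fourier computation identifies this bottom: the symbol matrix has characteristic equation $(\xi^{2}+L-\lambda)^{2}+(\alpha-\beta)^{2}\xi^{2}=PQ$, and the infimum over $\xi\in\mathbb{R}$ of the lower real branch $\lambda_{-}(\xi)=\xi^{2}+L-\sqrt{PQ-(\alpha-\beta)^{2}\xi^{2}}$ is attained at $\xi=0$ and equals $L-\sqrt{PQ}$. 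Hence $L\geq\sqrt{PQ}$, equivalently $L^{2}\geq PQ$, which is \eqref{lecv}. A sanity check on this formula: when $L^{2}\geq PQ$, constant functions $(\Phi,\Psi)=(A,B)$ with $B/A\in[Q/L,L/P]$ are themselves positive supersolutions of the reduced system on all of $\mathbb{R}$, recovering (i) directly.

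The main obstacle is making precise the Agmon-Allegretto-Piepenbrink reduction from ``existence of a positive supersolution on the half-line $\{\tau>\tau_{0}\}$'' to ``nonnegativity of the bottom of spectrum of the asymptotic autonomous operator''. It should be noted that a more hands-on approach — applying a Picone inequality separately to each of the two supersolution inequalities and combining with AM-GM and the standard Hardy inequality — yields only the weaker bound $\sqrt{pq\,ST}\leq(N-2)^{2}/4$. Capturing the sharp $(\alpha-\beta)^{2}$ correction baked into $L$ requires genuinely exploiting the non-symmetric drift structure of the transformed system, which is precisely what is encoded in the weighted Hardy-Rellich inequality of Caldiroli-Musina with $\gamma=\alpha-\beta$ recalled in the introduction.
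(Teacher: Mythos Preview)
Your argument for (iii) $\Rightarrow$ (i) via the explicit ansatz $\phi=|x|^{-s_0}$, $\psi=c|x|^{-t_0}$ is precisely the paper's construction, and your reduction of the two supersolution inequalities to $(s_0t_0)^2\geq pq\,a^{q-1}b^{p-1}=pq\,ST$ is correct.

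For (ii) $\Rightarrow$ (iii) the paper takes a shorter and more self-contained route than your Emden--Fowler\,/\,spectral plan. The key simplification you do not exploit is that \emph{scale invariance} of $(u_s,v_s)$ immediately upgrades ``stable outside some ball'' to ``stable outside $\overline{B_\rho}$ for every $\rho>0$'', hence strictly stable in \emph{every} bounded annulus. Assuming \eqref{lecv} fails, the variational characterization \eqref{optim} of the sharp Hardy--Rellich constant $C_\gamma$ (with $\gamma=\alpha-\beta$) directly furnishes a radial $\varphi$ supported in some annulus $\Omega$ with Rayleigh quotient $\lambda<K_1K_2$; setting $\psi=|x|^{2-\gamma}(-\Delta\varphi)$, the pair $(\varphi,\psi)$ is a positive Dirichlet eigen-pair in $\Omega$. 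Strict stability on $\Omega$ then yields, via Sweers' strong positivity, a positive solution $(\tilde\varphi,\tilde\psi)$ of the associated inhomogeneous system, and a single integration by parts between the two pairs gives the contradiction $\int_\Omega|x|^{2-\gamma}\Delta\varphi\,\Delta\tilde\varphi\le 0$. This completely sidesteps the obstacle you flag: no half-line Agmon--Allegretto--Piepenbrink statement for non-self-adjoint cooperative systems is needed, because the Caldiroli--Musina constant already packages the relevant bottom-of-spectrum value and the contradiction is obtained on a bounded domain.

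Your approach is not wrong in spirit and can be completed (the transformed operator is genuinely autonomous, so translation plus testing against constant vectors $(\Phi,\Psi)$ identifies the generalized principal eigenvalue on $\mathbb{R}$ as $L-\sqrt{PQ}$), but it is longer, and as you yourself note the system-level AAP step with opposite drift terms is not off-the-shelf. The paper's argument buys you exactly the ``$(\alpha-\beta)^2$ correction'' you worry about, without any spectral theory beyond the quoted optimal constant \eqref{optim}.
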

\begin{proof}
Since the implication (i)$\Rightarrow$(ii) is trivial, we only need to prove the implications
$$
{\rm (ii)}\Rightarrow{\rm (iii)}\Rightarrow{\rm (i)}
$$

Assume first that (ii) holds, that is, the singular solution
$(u_s,v_s)$ is stable outside of a compact set. Thus, $(u_s,v_s)$ is stable in $\R^N\setminus \overline{B_r}$ for some $r>0$. By scale invariance, $(u_s,v_s)$ is stable in $\R^N\setminus \overline{B_\rho}$ for all $\rho>0$.


Set $\gamma=\alpha-\beta$, where $\alpha,\beta$ are the scaling exponents given by \eqref{leab}  and let $K_{1}, K_{2}$ be the constants such that
$$
 pv_s^{p-1}= K_1|x|^{-2+\gamma}\qquad\text{and}\qquad
qu_s^{q-1} =K_2|x|^{-2-\gamma}.
$$
Then, $(p,q)$ satisfies \eq{lecv} if and only if
$$
C_{\gamma}\ge K_{1}K_{2},
$$
where $C_{\gamma}$ is given by \eqref{optim}. Assume by contradiction that $(p,q)$ does not satisfy \eq{lecv}.
Then, we may find an open annular region $\o=B_{R_1}\setminus \overline{B_{R_2}}$ such that
\neweq{mu}
\lambda:=\min_{\varphi\in H\setminus\{0\}} \frac{\displaystyle \int_{\o}|x|^{2-\gamma}|\Delta \varphi|^2dx}{\displaystyle\int_{\o}|x|^{-2-\gamma}\vphi^2 dx}<K_1K_2,
\endeq
where $H$ is the space of radial functions $\varphi$ such that  $\int_{\o}|x|^{2-\gamma}|\Delta \varphi|^2dx<+\infty$ and $\varphi=0$ on $\partial\o$.
Let $\vphi>0$ be a minimizer of \eq{mu}, so that letting $\psi=\vert x\vert^{2-\gamma}(-\Delta\varphi)$, we have
$$
\left\{\begin{aligned}
-&\Delta \vphi= |x|^{-2+\gamma} \psi, \; \vphi>0&& \quad\mbox{ in }\Omega,\\
-&\Delta \psi=\lambda |x|^{-2-\gamma} \vphi, \; \psi>0&& \quad\mbox{ in }\Omega,\\
&\vphi=\psi=0 && \quad\mbox{ on }\partial\Omega.
\end{aligned}\right.
$$
Since $(u_s,v_s)$ is strictly stable in $\o$, thanks to \cite[Theorem 1.1]{sweers92}, there also exists $(\tilde \vphi, \tilde \psi)\in C^2(\overline\o)^2$ such that
$$
\left\{\begin{aligned}
-&\Delta \tilde\vphi= K_1|x|^{-2+\gamma} \tilde\psi, \; \tilde\vphi>0&& \quad\mbox{ in }\Omega,\\
-&\Delta \tilde\psi=K_2|x|^{-2-\gamma} \tilde\vphi+1, \; \tilde\psi>0&& \quad\mbox{ in }\Omega,\\
&\tilde\vphi=\tilde\psi=0 && \quad\mbox{ on }\partial\Omega.
\end{aligned}\right.
$$
A straightforward integration by part shows that $\vphi$ and $\tilde\vphi$ satisfy
$$
\langle \vphi, \tilde\vphi\rangle:=\int_{\o}|x|^{2-\gamma}\Delta \vphi\Delta \tilde\vphi dx\le0
$$
which is impossible, since both $\psi$ and $\tilde\psi$ are positive. Hence $(p,q)$ satisfies \eq{lecv} and we have proved that (ii) implies (iii).

Assume now (iii). It is easy to see that
\neweq{fipsi}
\phi(x)=\frac{4K_1}{(N-2-\gamma)(N-2+\gamma)}|x|^{-\frac{N-2-\gamma}{2}}\;,\quad
\psi(x)=|x|^{-\frac{N-2+\gamma}{2}}
\endeq
satisfy
\neweq{lec1}
\begin{aligned}
-\Delta \phi&= pv_s^{p-1} \psi\\
-\Delta \psi&\geq qu_s^{q-1} \phi
\end{aligned}
\endeq
in $\R^N\setminus\{0\}$, which means that $(u_s,v_s)$ is stable in $\R^N\setminus\{0\}$.
\end{proof}

\section{Proof of Theorem \ref{stable}}

We start this section with the following simple remark.
\begin{remark}\label{simple}
Let $(u,v)$ be a radially symmetric solution of \eq{le}. Then
$$
\lim_{r\rightarrow \infty} u(r)=\lim_{r\rightarrow \infty} v(r)=0.
$$
\end{remark}
To see this, we first note that $(u,v)$ satisfies
\neweq{uvsat}
\left\{\begin{aligned}
-&(r^{N-1}u')'= r^{N-1}v^p & \quad\mbox{ for all } r\geq 0,\\
-&(r^{N-1}v')'= r^{N-1}u^q & \quad\mbox{ for all } r\geq 0.
\end{aligned}\right.
\endeq
This implies that $r\longmapsto r^{N-1}u'(r)$ and $r\longmapsto r^{N-1}v'(r)$ are decreasing on $[0,\infty)$ and so $u',v'\leq 0$ in $[0,\infty)$.
Thus, $u$ and $v$ are decreasing in $[0,\infty)$. Hence,  there exist
$$
\ell_1:=\lim_{r\rightarrow \infty }u(r)\in [0,\infty)\,,\quad \ell_2:=\lim_{r\rightarrow \infty }v(r)\in [0,\infty),
$$
and $u\geq \ell_1$, $v\geq \ell_2$ in $[0,\infty)$.

If $\ell_2>0$, then, the first equation in \eq{uvsat} implies
$$
-(r^{N-1}u')'\geq  C r^{N-1}  \quad\mbox{ for all } r\geq 0,
$$
where $C=\ell^p_2>0$. Integrating twice over $[0,r]$ in the above inequality we deduce
$$
-u(r)+u(0)\geq \frac{C}{2N}r^2\rightarrow \infty\quad\mbox{ as }r\rightarrow \infty,
$$
contradiction. Thus, $\ell_2=0$ and similarly $\ell_1=0$ which proves our claim.

\medskip

Assume $(p,q)$ satisfies \eq{lecv}. Then by Proposition \ref{stabsing}, the singular solution $(u_s,v_s)$ is stable in $\R^N\setminus\{0\}$. 

Theorem \ref{stable}(i) follows from the proposition below.
\begin{prop}\label{pcrit}
Assume $(p,q)$ satisfies \eq{lecv}. Then, for any radially symmetric solution $(u,v)$ of \eq{le} we have
\neweq{ineq}
u<u_s\quad\mbox{ and }\quad v<v_s\quad\mbox{ in }\;\R^N\setminus\{0\}.
\endeq
\end{prop}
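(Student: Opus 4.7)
I plan to argue by contradiction using the stability of $(u_s,v_s)$ from Proposition~\ref{stabsing}. Since $u,v$ are continuous at the origin with finite values while $u_s,v_s$ blow up there, the strict inequalities $u<u_s$ and $v<v_s$ hold on a small punctured ball $B_{\rho_0}\setminus\{0\}$. Define
\[
r_*:=\sup\{\,r>0 : u<u_s \text{ and } v<v_s \text{ on } B_r\setminus\{0\}\,\},
\]
so that $r_*\ge\rho_0>0$; it suffices to show $r_*=+\infty$. Suppose for contradiction $r_*<\infty$; by continuity, at $r_*$ at least one of $u(r_*)=u_s(r_*)$, $v(r_*)=v_s(r_*)$ holds. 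Set $\Phi:=u_s-u$ and $\Psi:=v_s-v$. Since $p,q\ge 1$ and $u\le u_s$, $v\le v_s$ on $B_{r_*}$, the convexity of $t\mapsto t^p$ and $t\mapsto t^q$ yields
\[
-\Delta\Phi=v_s^p-v^p\ge pv^{p-1}\Psi\ge 0,\qquad -\Delta\Psi=u_s^q-u^q\ge qu^{q-1}\Phi\ge 0,
\]
so that $\Phi,\Psi\ge 0$ are superharmonic on $B_{r_*}\setminus\{0\}$, both blowing up at the origin.

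The plan is to leverage the stability of $(u_s,v_s)$ via its explicit positive supersolution $(\phi_s,\psi_s)$ from \eqref{fipsi}. Because $u\le u_s$ and $v\le v_s$ on $B_{r_*}$ and $p,q\ge 1$, the pair $(\phi_s,\psi_s)$ is a positive supersolution not only of the linearization at $(u_s,v_s)$ but also of the linearization at $(u,v)$ on $B_{r_*}\setminus\{0\}$. By the argument of Lemma~\ref{corst}, this produces strict stability of $(u,v)$ on every bounded annulus $\Omega\Subset B_{r_*}\setminus\{0\}$, and Lemma~\ref{stablebounded} then gives the minimality of $(u,v)$ on each such $\Omega$. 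I then apply Hopf's boundary-point lemma to the superharmonic pair $(\Phi,\Psi)$ at the component vanishing at $r_*$, obtaining $\partial_r\Phi(r_*)<0$ (or $\partial_r\Psi(r_*)<0$). Combining this with the cooperative strong maximum principle for the linearized system at $(u_s,v_s)$ — whose maximum principle valid on annuli $\Omega\Subset\R^N\setminus\{0\}$ comes from the stability \eqref{lecv} via $(\phi_s,\psi_s)$ — I aim to propagate $u<u_s$, $v<v_s$ slightly across $r_*$, contradicting the maximality of $r_*$.

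The main obstacle is this propagation across $r_*$ when the touching is asymmetric. If only $u(r_*)=u_s(r_*)$ while $v(r_*)<v_s(r_*)$, then just past $r_*$ one has $u>u_s$ but $v<v_s$, producing a mixed-sign configuration for the coupled system that no scalar comparison can resolve: the equation for $\Phi$ becomes subharmonic while that for $\Psi$ stays superharmonic, so their signs decouple. Controlling $\Phi$ and $\Psi$ simultaneously requires the cooperative coupling of the system together with the quantitative stability of $(u_s,v_s)$ encoded by \eqref{lecv} (equivalently the Hardy-Rellich inequality \eqref{mussina}), which is exactly what the hypothesis $N\ge 11$ and $(p,q)$ on or above the Joseph-Lundgren curve provides. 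This coupled strong maximum principle is the technical heart of the proof.
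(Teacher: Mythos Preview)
Your proposal identifies the right difficulty but does not resolve it; the ``cooperative strong maximum principle'' you invoke does not apply in the direction you need. Indeed, writing $\Phi=u_s-u$, $\Psi=v_s-v$, convexity of $t\mapsto t^p$ gives
\[
-\Delta\Phi=v_s^p-v^p\le p\,v_s^{p-1}\Psi,\qquad -\Delta\Psi=u_s^q-u^q\le q\,u_s^{q-1}\Phi,
\]
so $(\Phi,\Psi)$ is a \emph{sub}solution, not a supersolution, of the linearization at $(u_s,v_s)$; the maximum principle coming from stability of $(u_s,v_s)$ therefore cannot force $\Phi,\Psi\ge0$. Your alternative route through the linearization at $(u,v)$ is circular: the supersolution property of $(\phi_s,\psi_s)$ for that linearization relies on $u\le u_s$, $v\le v_s$, which is precisely what you are trying to extend past $r_*$. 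And Hopf's lemma at $r_*$ gives $\Phi'(r_*)<0$, which confirms a transversal crossing rather than preventing it. In short, the ``technical heart'' you announce is never supplied.

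The paper's argument is structurally different. It does not work at the first touching point at all. Instead, setting $U=u_s-u$, $V=v_s-v$, it looks at the first zeros $r_1,r_2$ of the \emph{derivatives} $U',V'$, and by integrating the ODEs it shows that at $r_1$ (assuming $r_1\le r_2$) one has simultaneously $U(r_1)\le0$, $U'(r_1)=0$, $V(r_1)<0$, $V'(r_1)\le0$. The contradiction then comes from a Wronskian-type identity: multiplying the linearized inequalities \eqref{lec1} and \eqref{lec7} against each other and integrating over $B_r$ yields
\[
V\phi'-\phi V'+U\psi'-\psi U'\le 0\quad\text{for all }r>0,
\]
where $(\phi,\psi)$ is the explicit positive supersolution \eqref{fipsi} with $\phi,\psi>0$ and $\phi',\psi'<0$. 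Evaluating at $r=r_1$ makes every term on the left nonnegative and at least one strictly positive, which is the contradiction. This monotonicity/Wronskian mechanism is the missing idea in your sketch.
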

\begin{proof}
Assume by contradiction that there exists a radially symmetric solution $(u,v)$ of \eq{le} for which \eq{ineq} fails to hold and set
$$
U=u_s-u\;,\quad V=v_s-v.
$$
Since \eq{ineq} is not fulfilled, $U'$ and $V'$ must change sign in $(0,\infty)$. Indeed, otherwise $U'<0$ or $V'<0$ in $(0,\infty)$ which implies (since $U(\infty)=V(\infty)=0$) that $u_s\geq u$ or $v_s\geq v$ in $(0,\infty)$. Now, the maximum principle yields $u_s\geq u$ and $v_s\geq v$ in $(0,\infty)$ and this contradicts our assumption.

Let $r_1>0$ (resp. $r_2>0$) be the first zero of $U'$ (resp. $V'$). Thus
$$
U'<0 \mbox{ in }(0,r_1) ,\; U'(r_1)=0,\quad V'<0\mbox{ in }(0,r_2),\; V'(r_2)=0.
$$
Without losing the generality, we may assume $r_2\geq r_1$. Set next
$$
r_3:=\inf\{r>0:V(r)<0\}\in (0,\infty]
$$
and we claim that $r_3<r_1$. If $r_3\geq r_1$ then $V>0$ in $(0,r_1)$ which means
\neweq{vv1}
v<v_s\quad\mbox{ in }(0,r_1).
\endeq
Integrating in \eq{le} and using \eq{vv1} we find
$$
(r^{N-1}u')'=-r^{N-1}v^p>-r^{N-1}v_s^p=(r^{N-1}u'_s)'\quad\mbox{ in }(0,r_1).
$$
Integrating the above inequality over $[0,r_1]$ we find
$u'(r_1)>u'_s(r_1)$ which contradicts $U'(r_1)=0$. Hence $r_3\in (0,r_1)$. Similarly we define
$$
r_4:=\inf\{r>0:U(r)<0\}\in (0,\infty]
$$
and as before we deduce $r_4\in (0,r_2)$. In fact, we show that $r_4\leq  r_1$. Assuming the contrary, that is, $r_4> r_1$, we find $r_1< r_4<r_2$. Further, since $V'<0$ in $(0,r_2)$ we deduce $V(r)<V(r_3)=0$ for all $r\in (r_3,r_2)$ so $v_s<v$ in $(r_3,r_2)$.
Therefore,
$$
(r^{N-1}u')'=-r^{N-1}v^p<-r^{N-1}v_s^p=(r^{N-1}u'_s)'\quad\mbox{ in }(r_3,r_2).
$$
Integrating over $[r_1,r]$, $r_1<r<r_2$, and using $U'(r_1)=0$ we obtain $u'(r)<u_s'(r)$ for all $r\in (r_1,r_2)$. This means that $U$ is increasing in $(r_1,r_2)$. In particular, $U(r_1)< U(r_4)=0$. On the other hand, from the definition of $r_4$ we have $U(r_1)>0$, contradiction. We have thus obtained $r_3<r_1$, $r_4\leq r_1\leq r_2$ which yield
\neweq{vv2}
U(r_1)\leq 0,\; U'(r_1)=0,\;V(r_1)<0,\;V'(r_1)\leq 0.
\endeq
Next, let $(\phi,\psi)$ be defined by \eq{fipsi} and recall that $(\phi,\psi)$ solves the linearized equation \eq{lec1} in $\rmc$. Also, since $p\ge q\ge1$, $(U,V)$ satisfies
\neweq{lec7}
\left\{\begin{aligned}
-&\Delta U\leq pv_s^{p-1}V\\
-&\Delta V\leq qu_s^{q-1}U
\end{aligned}\right. \quad\mbox{ in }\rmc.
\endeq
We multiply the equations in \eq{lec1} by $V$ and $U$, and the two equations in \eq{lec7} by $\psi$ and $\phi$ respectively. Integrating over $B_r$, $r>0$, we find
$$
\int_{B_r}(-\Delta U)\psi\leq \int_{B_r}(-\Delta\phi)V\quad\mbox{ and }\quad \int_{B_r}(-\Delta V)\phi\leq \int_{B_r}(-\Delta \psi)U.
$$
Adding the above inequalities we deduce
$$
\int_{B_r} \Big(V\Delta \phi -\phi\Delta V\Big)+
\int_{B_r} \Big(U\Delta\psi -\psi\Delta U\Big)\leq 0\quad\mbox{ for all }r>0,
$$
that is,
$$
\int_{\partial B_r} \Big(V\frac{\partial \phi}{\partial\nu}-\phi\frac{\partial V}{\partial \nu}\Big)+
\int_{\partial B_r} \Big(U\frac{\partial \psi}{\partial\nu}-\psi\frac{\partial U}{\partial \nu}\Big)\leq 0\quad\mbox{ for all }r>0.
$$
Since $U,V,\phi,\psi$ are radially symmetric, this yields
\neweq{vv3}
V\phi'-\phi V'+U\psi'-\psi U'\leq 0\quad\mbox{ in }(0,\infty).
\endeq
Now, let us remark that $\phi,\psi>0$ and $\phi',\psi'<0$ in $(0,\infty)$.
Combining this fact with \eq{vv2} we deduce that \eq{vv3} does not hold ar $r=r_1$, a contradiction. Hence $u<u_s$ and $v<v_s$ in $\rmc$.
\end{proof}

Assume next that \eq{lecv} fails to hold. We establish first the following result.
\begin{prop}\label{louis}
Assume $(p,q)$ does not satisfy \eq{lecv}. Then, for any stable solution $(u,v)$ of \eq{le} we have
$$
u<u_s\quad\mbox{ and }\quad v<v_s\quad \mbox{ in  }\;\R^N\setminus\{0\}.
$$
\end{prop}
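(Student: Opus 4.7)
My plan is to follow the contradiction scheme of Proposition \ref{pcrit} but to swap the roles of the singular and regular solutions in the stability pairing: since $(p,q)$ violates \eq{lecv}, Proposition \ref{stabsing} rules out the explicit test pair $(\phi,\psi)$ from \eq{fipsi}, and I exploit the stability of $(u,v)$ instead. Assume $(u,v)$ is a stable radial solution for which the conclusion fails, and set $U=u_{s}-u$, $V=v_{s}-v$; near the origin $U,V>0$, since $(u,v)$ is regular at $0$ while $(u_{s},v_{s})$ blows up. The purely ODE-based portion of the proof of Proposition \ref{pcrit}, which uses only the radial form \eq{uvsat} and the contradiction hypothesis, carries over verbatim and produces radii $r_{1},r_{2},r_{3},r_{4}$ with
$$
U(r_{1})\le 0,\quad U'(r_{1})=0,\quad V(r_{1})<0,\quad V'(r_{1})\le 0.
$$

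Convexity of $t\mapsto t^{p}$ and $t\mapsto t^{q}$ (valid since $p,q\ge 1$), applied this time by linearizing at $(u,v)$ rather than at $(u_{s},v_{s})$, yields
$$
-\Delta U\ge p v^{p-1}V,\qquad -\Delta V\ge q u^{q-1}U\quad\mbox{in } \R^{N}\setminus\{0\}.
$$
By stability of $(u,v)$ there also exists $(\Phi,\Psi)\in C^{2}(\R^{N})^{2}$, positive in $\R^{N}$, satisfying the same linearized inequalities. I then pair $(U,V)$ against $(\Phi,\Psi)$ via Green's identity on $B_{r}$, exactly as in the proof of Proposition \ref{pcrit}; cancellation of the common cross terms $pv^{p-1}V\Psi$ and $qu^{q-1}U\Phi$ should produce a boundary identity of the form
$$
V\Phi'-\Phi V'+U\Psi'-\Psi U'\le 0\quad\mbox{in } (0,\infty).
$$
Evaluating at $r=r_{1}$, the signs of $U,V,U',V'$ above combined with the positivity and (target) monotonicity $\Phi',\Psi'<0$ force each term on the left to be non-negative with at least one strictly positive (e.g.\ $V\Phi'>0$), the desired contradiction.

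The main technical obstacle is that, in contrast to Proposition \ref{pcrit} where $(\phi,\psi)$ is given explicitly by \eq{fipsi} and is automatically radial decreasing, an abstract stability supersolution $(\Phi,\Psi)$ need not enjoy either property; moreover, since both $(U,V)$ and $(\Phi,\Psi)$ now satisfy the linearized inequalities in the \emph{same} direction, the naive Green pairing does not cancel the cross terms as cleanly as before. I expect to resolve this by replacing $(\Phi,\Psi)$ with the positive principal eigenpair of the linearized Dirichlet problem on a large ball $B_{R}\supset B_{r_{1}}$, which is strictly positive by Lemma \ref{corst}, radial by uniqueness, monotone decreasing via its associated radial Sturm--Liouville ODE, and satisfies the linearized \emph{equations} up to an additional term $\lambda_{1}>0$ whose sign can be tracked through the pairing. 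Letting $R\to\infty$ if necessary should then recover the boundary identity above and close the argument.
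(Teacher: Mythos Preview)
Your Green-pairing step has a genuine gap that the proposed fix does not close. In Proposition~\ref{pcrit} the cancellation of the cross terms $pv_s^{p-1}V\psi$ and $qu_s^{q-1}U\phi$ works precisely because $(U,V)$ and $(\phi,\psi)$ satisfy the linearized inequalities at $(u_s,v_s)$ with \emph{opposite} signs. In your setup both $(U,V)$ and $(\Phi,\Psi)$ are supersolutions of the linearization at $(u,v)$; since $U$ and $V$ change sign, you cannot multiply the $(\Phi,\Psi)$-inequalities by $U,V$ and keep a definite direction. Replacing $(\Phi,\Psi)$ by the Dirichlet principal eigenpair on $B_R$ does give equalities, but the computation then yields
\[
r^{N-1}\bigl[U\Psi'-\Psi U'+V\Phi'-\Phi V'\bigr](r)\ \ge\ -\lambda_1\int_{B_r}(U\Psi+V\Phi),
\]
and near the origin $U,V>0$, so the right-hand side is generically negative; you obtain ``positive $\ge$ something negative'' at $r=r_1$, which is no contradiction. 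Sending $R\to\infty$ does not help, since stability only gives $\lambda_1(B_R)\downarrow\lambda_1(\R^N)\ge0$, not $=0$, and the integral does not vanish.

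More tellingly, your argument never uses the hypothesis that \eq{lecv} fails; if it worked, it would prove the ordering $u<u_s$, $v<v_s$ from the stability of $(u,v)$ alone, uniformly in $(p,q)$. The paper's proof is structurally different: it exploits the failure of \eq{lecv} through Proposition~\ref{stabsing}, observing that whenever $u\ge u_s$ and $v\ge v_s$ on an exterior domain, the stability supersolution for $(u,v)$ is automatically one for $(u_s,v_s)$ there (since $v^{p-1}\ge v_s^{p-1}$, $u^{q-1}\ge u_s^{q-1}$), contradicting the instability of $(u_s,v_s)$ outside compact sets. This observation, together with repeated applications of Corollary~\ref{lecbalcor} and the scalar maximum principle, lets the paper track successive sign-changing radii $r_1<r_2\le r_3\le r_4\le r_5$ of $u-u_s$ and $v-v_s$ and reach a contradiction by a case analysis, without any pairing or monotonicity of an eigenpair.
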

\begin{proof} Assume by contradiction that $u-u_s$ changes sign in $\rmc$. Then $v-v_{s}$ also changes sign in $\rmc$ for otherwise $v-v_s\leq 0$ in $\rmc$ implies
$$
-\Delta(u-u_s)=v^p-v_s^p\leq 0\quad\mbox{ in }\;\rmc.
$$
Also $u-u_s<0$ in a neighborhood of the origin and by Remark \ref{simple} we have $u(x)-u_s(x)\to0$ as $|x|\ri\infty$. By the maximum principle, we deduce $u-u_s\leq 0$ in $\rmc$ which contradicts our assumption.

Hence $u-u_s$ and $v-v_s$ change sign on $(0,\infty)$. Denote by $r_1$ (resp. $r_2$) the first sign-changing zero of $u-u_s$ (resp. $v-v_s$). From Corollary \ref{lecbalcor}, $u-u_s$ (resp. $v-v_s$) cannot be zero in a whole neighborhood of $r_1$ (resp. $r_2$). Without losing generality, we may assume that $r_1\leq r_2$.

We claim that $u-u_s$ has a second sign-changing point $r_3>r_1$. Indeed, otherwise $u-u_s\geq 0$ in $\R^N\setminus B_{r_1}$ which by the maximum principle implies that $v-v_s\geq 0$ in $\R^N\setminus B_{r_2}$. Therefore, $u\geq u_s$, $v\geq v_s$ in $\R^N\setminus B_{r_2}$ which implies that $(u_s,v_s)$ is a stable solution of \eq{le} in $\R^N\setminus B_{r_2}$ and thus, contradicts Proposition \ref{stabsing}. Hence, there exists $r_3>r_1$ a second sign-changing point of $u-u_s$. Further, we must have $r_3\geq r_2$ for otherwise $r_1<r_3< r_2$. Then $u(r_3)=u_s(r_3)$ and $v(r_3)<v_s(r_3)$ which by Corollary \ref{lecbalcor} yields $u<u_s$, $v<v_s$ in $B_{r_3}\setminus\{0\}$. But this is impossible since $u(r_1)=u_s(r_1)$. Thus, $r_3\geq r_2$.

We next claim that $v-v_s$ has a second sign-changing point $r_4>r_2$. As before, if this is not true, then $v-v_s\geq 0$ in $\R^N\setminus B_{r_2}$ and by the maximum principle we find $u-u_s\geq 0$ in $\R^N\setminus B_{r_3}$. Then $u\geq u_s$, $v\geq v_s$ in $\R^N\setminus B_{r_3}$, so $(u_s,v_s)$ is stable in $\R^N\setminus B_{r_3}$ which contradicts Proposition \ref{stabsing}.

We show next that $r_4\geq r_3$. Assuming the contrary we have $r_2<r_4<r_3$. At this stage, two cases may occur:

\noindent{\sc Case 1:} $v\leq v_s$ in $(r_4,r_3)$. Remark that $u(r_3)=u_s(r_3)$ and $v(r_3)\leq v_s(r_3)$. By Corollary \ref{lecbalcor} we deduce $u<u_s$ in $B_{r_3}$ which is impossible since $u(r_1)=u_s(r_1)$.

\noindent{\sc Case 2:} $v- v_s$ has a third sign-changing point  $\rho\in (r_4,r_3)$. Then $v-v_s>0$ on $(r_2,r_4)$ and $v-v_s<0$ on $(r_4,\rho)$. On the other hand,
$$-\Delta (v-v_s)=u^q-u_s^q\geq 0 \quad\mbox{ in }B_\rho\setminus \overline B_{r_4}
$$
and $v-v_s=0$ on $\partial (B_\rho\setminus B_{r_4})$. The maximum principle yields $v-v_s>0$ on $(r_4,\rho)$, a contradiction.
We have proved that $r_4\geq r_3$.

We claim that $u-u_s$ has a third sign-changing point $r_5>r_3$. Indeed, if this is not true, then $u-u_s\leq 0$ in $\R^N\setminus B_{r_3}$ and by the maximum principle we have $v-v_s\leq 0$ in $\R^N\setminus B_{r_4}$. Hence $u\leq u_s$, $v\leq v_s$ in $\R^N\setminus B_{r_4}$ which combined with Corollary \ref{lecbalcor} produces $u<u_s$, $v<v_s$ in $B_{r_4}$. This is clearly impossible since $u(r_1)=u_s(r_1)$.
Hence, $u-u_s$ has a third sign-changing point $r_5>r_3$.

If $r_5\leq r_4$ then
$$-\Delta (u-u_s)=v^p-v_s^p\geq 0 \quad\mbox{ in }B_{r_5}\setminus \overline B_{r_3}
$$
and $u-u_s=0$ on $\partial (B_{r_5}\setminus B_{r_3})$. By the maximum principle we infer that $u-u_s\geq 0$ in
$B_{r_5}\setminus B_{r_3}$ which implies $u-u_s\geq 0$ in
$B_{r_5}\setminus B_{r_1}$. This contradicts the fact that $r_3\in (r_1,r_5)$ is a sign-changing point of $u-u_s$.

If $r_5> r_4$ then $u(r_4)\leq u_s(r_4)$ and $v(r_4)=v_s(r_4)$. By Corollary \ref{lecbalcor} we deduce $u<u_s$, $v<v_s$ in $B_{r_4}$ which is again a contradiction.
\end{proof}

We are now ready to complete the proof of Theorem \ref{stable}(ii). We adapt an idea introduced in \cite{ddm}. Assume there exists a positive stable radially symmetric solution $(u,v)$ of \eq{le} and set
$$
M_1=\sup_{r\in (0,\infty)}\frac{u(r)}{u_s(r)}\,,\quad
M_2=\sup_{r\in (0,\infty)}\frac{v(r)}{v_s(r)}.
$$
By Proposition \ref{louis} we have $M_1,M_2\leq 1$.
Since $\lim_{r\rightarrow\infty}u(r)=0$, $u$ coincides with the Newtonian potential of $v^p$. Hence
$$
\begin{aligned}
u(x)&=c_N\int_{\R^N} |x-y|^{2-N}v^p(y)dy\\
&\leq M_2^p\left\{
c_N\int_{\R^N} |x-y|^{2-N}v_s^p(y)dy\right\}=M_2^pu_s(x).
\end{aligned}
$$
Thus, $M_1\leq M_2^p$ and similarly $M_2\leq M_1^q$. It follows that $M_1\leq M_1^{pq}$. So, since $pq>1$ we have either $M_1=0$ or $M_1=1$. If $M_1=0$ then $u\equiv 0$ and this yields $v\equiv 0$ which is impossible. Therefore $M_1=1$ and similarly $M_2=1$, i.e.
$$
\sup_{r\in (0,\infty)}\frac{u(r)}{u_s(r)}=\sup_{r\in (0,\infty)}\frac{v(r)}{v_s(r)}=1.
$$
By the strong maximum principle, $(u,v)$ cannot touch $(u_{s},v_{s})$, so there exists a sequence $\{R_k\}$ converging to $+\infty$ such that
\neweq{conv}
\lim_{k\rightarrow\infty}\frac{u(R_k)}{u_s(R_k)}=1.
\endeq
Define
$$
u_k(r)=R^\a_ku(R_kr)\,,\quad v_k(r)=R^\b_kv(R_kr)\quad r\geq 0.
$$
By scale invariance we have
\neweq{scale}
0<u_k<u_s\,,\quad 0<v_k<v_s\quad\mbox{ in } \rmc
\endeq
and $(u_k, v_k)$ solves the Lane-Emden system \eq{le} in $\rmc$. By elliptic regularity, $\{(u_k,v_k)\}$ converges uniformly in $C^2_{loc}(\rmc)$ to a solution $(\widetilde u,\widetilde v)$ of \eq{le} which, in view of \eq{scale}, also satisfies
$$
0\leq \widetilde u\leq u_s\,,\quad 0\leq\widetilde v \leq v_s\quad\mbox{ in } \rmc.
$$
Let us remark that by \eq{conv} we have
$$
\widetilde u(1)=\lim_{k\ri\infty}u_k(1)=\lim_{k\ri\infty}R_k^au(R_k)=
\lim_{k\ri\infty}R_k^au_s(R_k)=u_s(1).
$$
On the other hand,
$$
\left\{\begin{aligned}
-&\Delta (\widetilde u-u_s)=\widetilde v^p-v_s^p\leq 0 \quad\mbox{ in }\rmc,\\
&\lim_{|x|\ri 0} (\widetilde u-u_s)\leq 0\,, \lim_{|x|\ri\infty} (\widetilde u-u_s) \leq 0.
\end{aligned}\right.
$$
By the strong maximum principle we deduce that $\widetilde u\equiv u_s$ in $\rmc$. This is impossible, since $\widetilde u$ is a stable solution by construction while $u_{s}$ is unstable when \eq{lecv} fails.

\section*{Acknowledgement}
The first named author was partially supported by the Ecos-Conicyt grant C09E06 between France and Chile. The second named author wishes to thank J. Wei and the math department at the Chinese University of Hong Kong, where part of this work was completed. The research of the third named author has been supported by a Ulysses project between France and Ireland.

%
%
%
%
%
%
%
%
%

\bibliographystyle{amsalpha}

\begin{bibdiv}
\begin{biblist}

\bib{bs04}{article}{
   author={Busca, J{\'e}r{\^o}me},
   author={Sirakov, Boyan},
   title={Harnack type estimates for nonlinear elliptic systems and
   applications},
   language={English, with English and French summaries},
   journal={Ann. Inst. H. Poincar\'e Anal. Non Lin\'eaire},
   volume={21},
   date={2004},
   number={5},
   pages={543--590},
   issn={0294-1449},
   review={\MR{2086750 (2005g:35071)}},
   doi={10.1016/j.anihpc.2003.06.001},
}

\bib{cmus}{article}{
   author={Caldiroli, Paolo},
   author={Musina, Roberta},
   title={Rellich inequalities with weights},
   journal={Calc. Var. Partial Differential Equations},
   volume={45},
   date={2012},
   number={1-2},
   pages={147--164},
   issn={0944-2669},
   review={\MR{2957654}},
   doi={10.1007/s00526-011-0454-3},
}

\bib{cowan2}{article}{
   author={Cowan, Craig},
   title={Regularity of stable solutions of a Lane-Emden type system},
   journal={http://arxiv.org/abs/1206.4273},
   date={19 june 2012},
}

\bib{cowan}{article}{
   author={Cowan, Craig},
   title={Liouville theorems for stable Lane-Emden systems and biharmonic problems},
   journal={http://arxiv.org/abs/1207.1081},
   date={4 july 2012},
}

\bib{ddf}{article}{
   author={D{\'a}vila, Juan},
   author={Dupaigne, Louis},
   author={Farina, Alberto},
   title={Partial regularity of finite Morse index solutions to the
   Lane-Emden equation},
   journal={J. Funct. Anal.},
   volume={261},
   date={2011},
   number={1},
   pages={218--232},
   issn={0022-1236},
   review={\MR{2785899}},
   doi={10.1016/j.jfa.2010.12.028},
}

\bib{ddm}{article}{
   author={D{\'a}vila, Juan},
   author={Dupaigne, Louis},
   author={Montenegro, Marcelo},
   title={The extremal solution of a boundary reaction problem},
   journal={Commun. Pure Appl. Anal.},
   volume={7},
   date={2008},
   number={4},
   pages={795--817},
   issn={1534-0392},
   doi={10.3934/cpaa.2008.7.795},
}

\bib{ddgr}{article}{
   author={Dumont, Serge},
   author={Dupaigne, Louis},
   author={Goubet, Olivier},
   author={R{\u{a}}dulescu, Vicentiu},
   title={Back to the Keller-Osserman condition for boundary blow-up
   solutions},
   journal={Adv. Nonlinear Stud.},
   volume={7},
   date={2007},
   number={2},
   pages={271--298},
   issn={1536-1365},
   review={\MR{2308040 (2008e:35062)}},
}

\bib{dggw}{article}{
   author={Dupaigne, Louis},
      author={Ghergu, Marius},
   author={Goubet, Olivier},
   author={Warnault, Guillaume},
   title={The Gelfand problem for the biharmonic operator},
   journal={to appear in  Arch. Ration. Mech. Anal.},
}

\bib{farina}{article}{
   author={Farina, Alberto},
   title={On the classification of solutions of the Lane-Emden equation on
   unbounded domains of $\Bbb R^N$},
   language={English, with English and French summaries},
   journal={J. Math. Pures Appl. (9)},
   volume={87},
   date={2007},
   number={5},
   pages={537--561},
   issn={0021-7824},
   review={\MR{2322150 (2008c:35070)}},
   doi={10.1016/j.matpur.2007.03.001},
}

\bib{hhy}{article}{
   author={Hajlaoui, H.},
   author={Harrabi, A.},
   author={Ye, D.},
   title={On stable solutions of biharmonic problem with polynomial growth},
   journal={arXiv:1211.2223},
  }

\bib{jl72}{article}{
   author={Joseph, D. D.},
   author={Lundgren, T. S.},
   title={Quasilinear Dirichlet problems driven by positive sources},
   journal={Arch. Rational Mech. Anal.},
   volume={49},
   date={1972/73},
   pages={241--269},
   issn={0003-9527},
   review={\MR{0340701 (49 \#5452)}},
}

\bib{gg}{article}{
   author={Gazzola, Filippo},
   author={Grunau, Hans-Christoph},
   title={Radial entire solutions for supercritical biharmonic equations},
   journal={Math. Ann.},
   volume={334},
   date={2006},
   number={4},
   pages={905--936},
   issn={0025-5831},
   review={\MR{2209261 (2007b:35114)}},
   doi={10.1007/s00208-005-0748-x},
}

\bib{karageorgis}{article}{
   author={Karageorgis, Paschalis},
   title={Stability and intersection properties of solutions to the
   nonlinear biharmonic equation},
   journal={Nonlinearity},
   volume={22},
   date={2009},
   number={7},
   pages={1653--1661},
   issn={0951-7715},
   review={\MR{2519683 (2011a:35240)}},
   doi={10.1088/0951-7715/22/7/009},
}

\bib{mitidieri}{article}{
   author={Mitidieri, Enzo},
   title={A Rellich type identity and applications},
   journal={Comm. Partial Differential Equations},
   volume={18},
   date={1993},
   number={1-2},
   pages={125--151},
   issn={0360-5302},
   review={\MR{1211727 (94c:26016)}},
   doi={10.1080/03605309308820923},
}

\bib{mit96}{article}{
   author={Mitidieri, Enzo},
   title={Nonexistence of positive solutions of semilinear elliptic systems
   in ${\bf R}^N$},
   journal={Differential Integral Equations},
   volume={9},
   date={1996},
   number={3},
   pages={465--479},
   issn={0893-4983},
   review={\MR{1371702 (96m:35087)}},
}

\bib{montenegro}{article}{
   author={Montenegro, Marcelo},
   title={Minimal solutions for a class of elliptic systems},
   journal={Bull. London Math. Soc.},
   volume={37},
   date={2005},
   number={3},
   pages={405--416},
   issn={0024-6093},
   review={\MR{2131395 (2005k:35099)}},
   doi={10.1112/S0024609305004248},
}

\bib{pqs}{article}{
   author={Pol{\'a}{\v{c}}ik, Peter},
   author={Quittner, Pavol},
   author={Souplet, Philippe},
   title={Singularity and decay estimates in superlinear problems via
   Liouville-type theorems. I. Elliptic equations and systems},
   journal={Duke Math. J.},
   volume={139},
   date={2007},
   number={3},
   pages={555--579},
   issn={0012-7094},
   review={\MR{2350853 (2009b:35131)}},
   doi={10.1215/S0012-7094-07-13935-8},
}

\bib{serrin-zou}{article}{
   author={Serrin, James},
   author={Zou, Henghui},
   title={Non-existence of positive solutions of semilinear elliptic
   systems},
   conference={
      title={A tribute to Ilya Bakelman},
      address={College Station, TX},
      date={1993},
   },
   book={
      series={Discourses Math. Appl.},
      volume={3},
      publisher={Texas A \& M Univ.},
      place={College Station, TX},
   },
   date={1994},
   pages={55--68},
   review={\MR{1423369 (97k:35065)}},
}

\bib{souplet}{article}{
   author={Souplet, Philippe},
   title={The proof of the Lane-Emden conjecture in four space dimensions},
   journal={Adv. Math.},
   volume={221},
   date={2009},
   number={5},
   pages={1409--1427},
   issn={0001-8708},
   review={\MR{2522424 (2010h:35088)}},
   doi={10.1016/j.aim.2009.02.014},
}

\bib{sweers92}{article}{
   author={Sweers, Guido},
   title={Strong positivity in $C(\overline\Omega)$ for elliptic systems},
   journal={Math. Z.},
   volume={209},
   date={1992},
   number={2},
   pages={251--271},
   issn={0025-5874},
   review={\MR{1147817 (92j:35046)}},
   doi={10.1007/BF02570833},
}

\bib{vandervorst}{article}{
   author={Van der Vorst, R. C. A. M.},
   title={Variational identities and applications to differential systems},
   journal={Arch. Rational Mech. Anal.},
   volume={116},
   date={1992},
   number={4},
   pages={375--398},
   issn={0003-9527},
   review={\MR{1132768 (93d:35043)}},
   doi={10.1007/BF00375674},
}

\bib{wxx}{article}{
   author={Wei, Juncheng},
   author={Xu, Xingwang },
   author={Yang, Wen},
   title={Classification of stable solutions to biharmonic problems in large dimensions},
   review={  http://www.math.cuhk.edu.hk/$\sim$wei/publicationpreprint.html},
}

\bib{wy2}{article}{
   author={Wei, Juncheng},
   author={Ye, Dong},
   title={Liouville Theorems for finite Morse index solutions of Biharmonic problem},
   review={  http://www.math.cuhk.edu.hk/$\sim$wei/publicationpreprint.html},
}
\end{biblist}
\end{bibdiv}

\end{document}